\documentclass[12pt]{amsart}
\usepackage{amssymb}
\usepackage{geometry}
\usepackage{amsfonts}
\usepackage{amsmath}
\usepackage{amsthm}
\usepackage{nopageno}
\usepackage{graphicx}
\usepackage{amsfonts,amscd}
\usepackage{verbatim}

\setlength{\textwidth}{13.5cm}

\newtheorem{theorem}{Theorem}

\newtheorem{corollary}[theorem]{Corollary}

\newtheorem{lemma}[theorem]{Lemma}

\begin{document}

\author[Hay]{Damon M. Hay}
\address{Department of Mathematics and Statistics \\
    Sam Houston State University
 \\ Lee Drain Building\\Box 2206 \\Huntsville, TX  77341
} \email{dhay@shsu.edu}

\address{Department of Mathematics and Statistics \\
    University of North Florida
 \\ 1 UNF Drive \\Jacksonville, FL  32224
} \email{damon.hay@unf.edu}

\title{Multipliers and hereditary subalgebras of operator algebras}

\begin{abstract}
We generalize some technical results of Glicksberg to the realm of general operator algebras and use them to give a  characterization of open and closed projections in terms of certain multiplier algebras.  This generalizes a theorem of J. Wells characterizing an important class of ideals in uniform algebras.  The difficult implication in our main theorem is that if a projection is open in an operator algebra, then the multiplier algebra of the associated hereditary subalgebra arises as the closure of the subalgebra with respect to the strict topology of the multiplier algebra of a naturally associated hereditary $C^*$-subalgebra.  This immediately implies that the multiplier algebra of an operator algebra $A$ may be obtained as the strict closure of $A$ in the multiplier algebra of the $C^*$-algebra generated by $A$. 
\end{abstract}

\maketitle

\section{Introduction}

By an \emph{operator algebra} we mean a norm-closed subalgebra of the bounded operators on a Hilbert space.  Abstractly, an operator algebra is a norm closed subalgebra of a $C^*$-algebra, and this is how we choose to regard them in this paper.  In any case, there is no assumption about the algebra being closed under the adjoint operation.  The general theory of such `nonselfadjoint' or `general' operator algebras is much less developed than that of $C^*$-algebras.  Many of the results in the general theory of $C^*$-algebras are possible because $C^*$-algebras possess an abundance of positivite elements, whereas general operator algebras need not have any.  Consequently, $C^*$-algebras possess increasing contractive approximate identities, whereas their nonselfadjoint subalgebras do not, necessarily.   Nonetheless, many of the important nonselfadjoint operator algebras do possess contractive approximate identities, though not necessarily increasing, and form a class about which something can be said.

A commutative $C^*$-algebra may always be thought of as a $C_0(\Omega)$-space, the algebra of continuous functions on a locally compact space $\Omega$ which vanish at infinity.  The ordering is obvious, but it is Urysohn's lemma which enables one to really see the existence of approximate identities in the closed ideals of the algebra.  Taking the view that general $C^*$-algebras represent `noncommutative' topological spaces, one could argue that the order structure  and the existence of approximate identities actually `are' the topology.  Indeed, in the general theory of $C^*$-algebras, order arguments and those involving approximate identities abound.  

Thinking of topology in terms of approximate identities can be turned around.  This is precisely what is done when one considers Akemann's open and closed projections in the second dual of a $C^*$-algebra.  More specifically, a projection $p$ in the second dual $A^{**}$ of a $C^*$-algebra $A$ is said to be an \emph{open projection} if it is the weak* limit of an increasing net in $A$.  A projection $q$ in $A^{**}$ is said to be a \emph{closed projection} if $q^{\perp} := 1-q$ is open.   These projections, which are defined using the order structure, allow one to view algebraic objects topologically.  The prototypical example of this is the result which characterizes closed ideals in a $C^*$-algebra as exactly those subspaces supported by an open projection.  

Turning back to the commutative case, a \emph{function algebra} is a norm closed subalgebra of a $C_0(\Omega)$-space which we assume separates the points of $\Omega$.   These are the spaces of which general operator algebras are the noncommutative analogue, and in the study of function algebras one has access, though limited, to the underlying topological space.     For general nonselfadjoint operator algebras, one lacks a robust order structure and the benefits of an underlying topological space.   Since every operator algebra sits inside a $C^*$-algebra, this containing $C^*$-algebra can be viewed as the underlying topological space.  In this view one hopes to do part of the work in the containing $C^*$-algebras and then hope that after passing to the subalgebra things still work out.   Using this approach, some of the general tools from the study of uniform algebras can be made to work in then noncommutative case.  An example of this appears in \cite{Hay} where norm-closed one-sided ideals with contractive approximate identities were characterized in terms of projections in the second dual of the operator algebra which were open  with respect to the containing $C^*$-algebra.  The main result of this paper provides another example of this phenomena.  Here we show that a certain toplogy, namely the `strict' topology, on the multiplier algebra of a $C^*$-algebra passes usefully to a closed subalgebra which generates the $C^*$-algebra.  Consequently, one can obtain the multiplier algebra of an operator algebra in terms of the multiplier algebra of the containing $C^*$-algebra.

For a function algebra $A \subset C_0(\Omega)$, one may define the multiplier algebra $M(A)$ of $A$ to be the space of bounded functions $f$ on $\Omega$, such that $fA \subset A$.   It can be shown that multipliers are necessarily continuous functions, and moreover, that $M(A)$ is a closed unital subalgebra of $ C_b(\Omega)$, the space of all bounded continuous functions on $\Omega$, which is a commutative unital $C^*$-algebra which is also $*$-isomorphic to the $C^*$-algebra of continuous functions on the Stone-\v{C}ech compactification of $\Omega$.  In the case that $A = C_0(\Omega)$, then $M(A)=C_b(\Omega)$.

In addition to the norm topology associated with $M(A)$, we may define the \emph{strict} topology on $M(A)$ to be the locally convex topology generated by seminorms of the form $$x \mapsto \|ax\|$$ for all $x \in M(A)$ and $a \in A$.
It is quite easy to show that $A$ is strictly dense inside $M(A)$.

Suppose $A$ is a \emph{uniform algebra} on a compact Hausdorff space $K$, which is a function algebra containing the constant functions.   If  $J$ is a closed ideal of $A$ posessing a contractive approximate identity, then in \cite{Wells} it is shown, among other things, that the multiplier algebra $M(J)$ of $J$ is equal to the closure of $J$ inside $C_b(K \cap E^C)$ in the strict topology, where $E$ is the closed subset of $K$ on which all elements of $J$ vanish, and where $ E^C$ denotes the set complement of $E$.  The space of all continuous functions on $K \cap E^C$ which vanish at infinity, $C_0(K \cap E^C)$, can be identified with a closed ideal $I$ of $C(K)$, so that the multiplier algebra $M(I)$ of $I$ can be identified with $C_b(K \cap E^C)$.  From this perspective, $J \subset I$ and $M(J)$ equals the closure of $J$ inside $M(I)$ with the strict topology relative to $I$.  Conversely, given a closed set $E$, define $J$ (resp. I) to be the set of functions in $A$ (resp. $C(K)$) which vanish on $E$.  If $M(J)$ is the strict closure of $J$ in $M(I)$, then $J$ is actually a closed ideal possessing a contractive approximate identity.  More precisely, Wells proves the following theorem.

\begin{theorem}[Wells \cite{Wells}]\label{Wellsthm}
Let $E$ be a closed subset of a compact Hausdorff space $K$ and let $A$ be a uniform algebra on $K$.  Denote  by $J$ the ideal of functions in $A$ which vanish on $E$.   The following are equivalent:
\begin{enumerate}
\item For every regular Borel measure $\mu$ on $K$ and $f \in A$, $$\int_K f \; d\mu = 0 \Rightarrow \int_{K \cap E} f \; d\mu = 0 \mbox{ for all } f \in A;$$
\item $M(J)$ is the closure of $J$ inside $C_b(K \cap E^C)$ with respect to the strict topology associated with $C_0(K \cap E^C)$;
\item The constant function $1$ is in the strict closure of $J$ in $C_b(K \cap E^C)$.
\end{enumerate}
\end{theorem}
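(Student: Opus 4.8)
The plan is to fix notation, dispose of the two ``soft'' equivalences by elementary multiplier manipulations, reduce the remaining substantive implications to a statement about annihilating measures via the duality theory of the strict topology, and then concentrate the real work on a single Hahn--Banach extension step powered by hypothesis (1). Throughout write $\Omega = K \setminus E$, so that $I := C_0(\Omega)$ is identified with the closed ideal of $C(K)$ of functions vanishing on $E$, $M(I) = C_b(\Omega)$, and $J = A \cap I$; I realize $M(J)$ concretely inside $C_b(\Omega)$ as $\{g \in C_b(\Omega) : gJ \subseteq J\}$, which contains the constant function $1$. I read condition (1) as the assertion that $\mu \perp A$ implies $\mu|_E \perp A$ (equivalently $\mu|_{E^C} \perp A$), where $\mu \perp A$ abbreviates $\int_K f\,d\mu = 0$ for all $f \in A$. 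Write $\beta$ for the strict topology on $C_b(\Omega)$ associated with $I$.

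First I would record the inclusion that is automatic: if $j_\alpha \to g$ in the topology $\beta$ and $j \in J$, then $j j_\alpha \to jg$ in norm, and since $J$ is a norm-closed ideal, $jg \in J$; hence $\overline{J}^{\beta} \subseteq M(J)$ always, so (2) is exactly the reverse inclusion $M(J) \subseteq \overline{J}^{\beta}$. The equivalence (2) $\Leftrightarrow$ (3) is then soft: (2) $\Rightarrow$ (3) because $1 \in M(J)$; conversely, given (3), choose a net $j_\alpha \in J$ with $j_\alpha \to 1$ strictly, and observe that for $g \in M(J)$ the products $g j_\alpha$ lie in $J$ and satisfy $\|i(g j_\alpha - g)\| = \|(ig)(j_\alpha - 1)\| \to 0$ for each $i \in I$ (using $ig \in I$), so $g \in \overline{J}^{\beta}$, giving (2).

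Next I would bring in Buck's description of the strict dual, $(C_b(\Omega),\beta)^* = M(\Omega)$, the finite regular Borel measures on $\Omega$. By the bipolar theorem, (3) is equivalent to the annihilator statement that every $\sigma \in M(\Omega)$ with $\sigma \perp J$ has total mass $\sigma(\Omega) = 0$. In this language (3) $\Rightarrow$ (1) is quick: given $\mu \perp A$ and $f \in A$, the measure $f\mu$ again annihilates $A$ because $A$ is an algebra, and its restriction to $\Omega$ annihilates $J$ (for $j \in J$ one has $jf \in A$, so $\int jf\,d\mu = 0$, while $j = 0$ on $E$), so the annihilator form of (3) yields $\int_{E^C} f\,d\mu = (f\mu)(\Omega) = 0$; as $f$ is arbitrary this is $\mu|_{E^C} \perp A$, i.e. (1).

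The whole difficulty is concentrated in (1) $\Rightarrow$ (3), which I would prove in the annihilator form: let $\sigma \in M(\Omega)$ with $\sigma \perp J$ and, viewing $\sigma$ as a measure on $K$ supported off $E$, note that the functional $f \mapsto \int_\Omega f\,d\sigma$ annihilates $J = \ker(A \to A|_E)$ and hence descends to $A|_E$. If one can represent this induced functional by a measure $\rho$ carried by $E$, then $\mu := \sigma - \rho$ satisfies $\mu \perp A$ on $K$ with $\mu|_{E^C} = \sigma$, and hypothesis (1) forces $\sigma = \mu|_{E^C} \perp A$, whence $\sigma(\Omega) = \int 1\,d\sigma = 0$, completing the annihilator form of (3). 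The main obstacle is precisely this representation: the functional induced on $A|_E$ must be shown continuous for the supremum norm on $E$, not merely for the a priori larger quotient norm inherited from $A/J$, before Hahn--Banach and the Riesz representation theorem can supply $\rho \in M(E)$. Establishing this norm control from the purely measure-theoretic hypothesis (1) is the heart of the matter, and is exactly where I expect the generalizations of Glicksberg's technical lemmas to be indispensable; it is the commutative shadow of the ``difficult implication'' highlighted in the abstract.
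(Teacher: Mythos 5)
Your reductions are correct and efficiently organized: the automatic inclusion $\overline{J}^{\beta}\subseteq M(J)$, the equivalence of (2) and (3) via a net $j_\alpha\to 1$ strictly, the translation of (3) through Buck's description of $(C_b(\Omega),\beta)^*$ into the statement that every $\sigma\in M(\Omega)$ annihilating $J$ has $\sigma(\Omega)=0$, and the implication $(3)\Rightarrow(1)$ using the measures $f\mu$ are all sound. (Note that the paper does not actually prove Theorem \ref{Wellsthm} --- it is quoted from Wells --- so the real comparison is with the commutative specialization of the lemmas of Section 2.)

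The genuine gap is that $(1)\Rightarrow(3)$ is not proved: you reduce it to the assertion that the functional $f\mapsto\int f\,d\sigma$ on $A$, which vanishes on $J$, is representable by a measure $\rho$ carried by $E$, and then explicitly defer that assertion. This cannot be left to the reader, because it is the whole theorem. A bare Hahn--Banach extension to $C(K)$ produces a measure $\nu$ with $\sigma-\nu\in A^{\perp}$, but $\nu$ need not be concentrated on $E$, so $(\sigma-\nu)|_{E^{C}}\neq\sigma$ in general and hypothesis (1) then tells you nothing about $\sigma$. What is needed is precisely the estimate you name, $|\int f\,d\sigma|\le C\sup_{E}|f|$ for $f\in A$ (equivalently, that the quotient norm on $A/J$ is dominated by the sup norm on $E$), and deriving this from the measure-theoretic hypothesis (1) is Glicksberg's contribution; it is the commutative case of Lemma \ref{quotientnorm}. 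The decomposition you actually want, $J^{\perp}=A^{\perp}+\{\mbox{measures carried by }E\}$, is the commutative case of Lemma \ref{tech}, whose proof combines Lemma \ref{quotientnorm} with an argument that the sum is weak* closed. To close the gap, either reproduce Glicksberg's estimate or specialize Lemmas \ref{quotientnorm} and \ref{tech} to $B=C(K)$ and $q=\chi_{E}$: extend $\sigma$ by zero to a measure on $K$ annihilating $J$, write $\sigma=\mu+\rho$ with $\mu\in A^{\perp}$ and $\rho$ carried by $E$, restrict to $E^{C}$ to get $\sigma=\mu|_{E^{C}}=\mu-\mu|_{E}$, and invoke (1) to conclude $\sigma\in A^{\perp}$, whence $\sigma(\Omega)=\sigma(1)=0$ as you intended.
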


In this note we consider an analogous result when $A$ is a closed unital subalgebra of a $C^*$-algebra $B$, $J$ is a closed hereditary subalgebra of $A$, and $I$ is an analogously associated hereditary $C^*$-subalgebra of $B$.  See Theorem \ref{thm:intro} below.  

 Multiplier algebras of $C^*$-algebras were introduced by Busby  (\cite{Busby68}) and those of general operator algebras with contractive approximate identities by Poon and Ruan (\cite{PR}).  There are many equivalent definitions of the multiplier algebra, but the one most suitable for our setting and techniques is in terms of the second dual.  Given an operator algebra $A$, selfadjoint or not, with contractive approximate identity, the multiplier algebra $M(A)$ may be taken to be the unital operator algebra
$$\{\eta \in A^{**}: \eta A \subset A \mbox{ and } A \eta \subset A \}.$$  For more information on multipliers of operator algebras (and operator spaces) see the monograph \cite{BL}, and the references therein.

Let $A$ be a unital subalgebra of a unital $C^*$-algebra $B$, with the same unit, and let $p\in A^{**}$ be a projection.   In \cite{BHN}, it is shown that the following are equivalent:
\begin{enumerate}
\item $p \in (pA^{**}p \cap A)^{\perp\perp}$;
\item $p$ is open as a projection in $B^{**}$;
\item $p$ is the left support projection of a right ideal of $A$ with left contractive approximate identity;
\item $p$ is the right support projection of a left ideal of $A$ with right contractive approximate identity.
\end{enumerate}
A projection $p \in A^{**}$ satisfying these equivalent conditions will be said to be \emph{open in $A^{**}$}.  The subalgebra $D = pA^{**}p \cap A$ appearing in condition (1) will be referred to as the \emph{hereditary subalgebra of $A$ supported by $p$}.  Moreover, this `HSA' possesses a contractive approximate identity $(e_t)$.
If we let $C$ be the hereditary $C^*$-subalgebra of $B$ supported by $p$,  that is $C = pB^{**}p \cap B$,  then $D = A \cap C$ and $(e_t)$ is also a contractive approximate identity for $C$.
\begin{lemma} Let $p$, $D$ and  $C$ be as above with $p$ open in $A^{**}$.  Then $M(D) = \{\eta \in M(C): \eta D \subset D \mbox{ and } \eta D \subset D\}$.\end{lemma}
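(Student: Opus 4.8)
The plan is to prove the asserted equality of sets by establishing both inclusions, the essential tools being the canonical identifications of the relevant biduals with corners of $A^{**}$ and $B^{**}$, together with the approximate identity $(e_t)$. Since $p$ is open in $A^{**}$, the weak* closure of $D$ inside $A^{**}$ is $pA^{**}p$, so the canonical embedding of biduals identifies $D^{**}$ with $pA^{**}p$; likewise $C^{**}$ is identified with $pB^{**}p$. Because the inclusion $A \hookrightarrow B$ induces a weak*-homeomorphic embedding $A^{**} \hookrightarrow B^{**}$ whose range is the weak* closure $\overline{A}^{w*}$ of $A$ in $B^{**}$, all of these identifications are mutually compatible, and in particular $D^{**} = pA^{**}p \subseteq pB^{**}p = C^{**}$. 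Recasting the membership condition in the definition of the multiplier algebra in this language, $M(D) = \{\eta \in pA^{**}p : \eta D \subseteq D,\ D\eta \subseteq D\}$, while the right-hand side of the lemma is $\{\eta \in pB^{**}p : \eta C \subseteq C,\ C\eta \subseteq C,\ \eta D \subseteq D,\ D\eta \subseteq D\}$. (I read the second displayed condition in the statement as $D\eta\subseteq D$.)

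For the forward inclusion, suppose $\eta \in M(D)$. Then $\eta \in pA^{**}p \subseteq pB^{**}p = C^{**}$ and the conditions $\eta D \subseteq D$, $D\eta \subseteq D$ hold by hypothesis, so it remains only to check that $\eta \in M(C)$, i.e. $\eta C \subseteq C$ and $C\eta \subseteq C$. Fix $c \in C$. Since $(e_t)$ is a two-sided contractive approximate identity for $C$, one has $e_t c \to c$ in norm, whence $\eta e_t c \to \eta c$ in norm by boundedness of left multiplication by $\eta$. For each $t$ we have $\eta e_t \in \eta D \subseteq D \subseteq C$ and $c \in C$, so $\eta e_t c \in C$; as $C$ is norm closed this forces $\eta c \in C$. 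The computation for $C\eta \subseteq C$ is symmetric, using $c e_t \to c$ in norm. Hence $\eta \in M(C)$ and the forward inclusion holds.

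For the reverse inclusion, suppose $\eta$ lies in the right-hand set, so that $\eta \in M(C) \subseteq C^{**} = pB^{**}p$ (whence $\eta = p\eta p$) and $\eta D \subseteq D$, $D\eta \subseteq D$. The only thing to prove is that $\eta$ actually lies in $pA^{**}p = D^{**}$; the two multiplier conditions then place it in $M(D)$. Here I would exploit the approximate identity once more: $\eta e_t \in \eta D \subseteq D \subseteq A$ for every $t$, while $e_t \to p$ weak* in $B^{**}$ gives $\eta e_t \to \eta p = \eta$ in the weak* topology of $B^{**}$, using separate weak* continuity of multiplication. Thus $\eta$ is a weak* limit of elements of $A$, i.e. $\eta \in \overline{A}^{w*} = A^{**}$; combined with $\eta = p\eta p$ and $p \in A^{**}$ this yields $\eta \in pA^{**}p = D^{**}$, as required.

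I expect the main obstacle to be not any single inclusion in isolation but the careful bookkeeping of the four ambient weak* topologies and the verification that the biduals $D^{**}$, $C^{**}$, $A^{**}$ embed compatibly into $B^{**}$ as the corners and weak* closures described above; this is what legitimizes the two approximate-identity arguments. The genuinely new content sits in the reverse inclusion, where passing from ``$\eta$ multiplies $C$ and $D$ correctly'' to the containment $\eta \in A^{**}$ rests on the identification of $A^{**}$ with the weak* closure of $A$ and on the observation that $\eta e_t$ already lies in $A$; the forward inclusion, by contrast, reduces to a routine norm approximation once the corner identifications are in place.
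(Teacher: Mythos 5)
Your proposal is correct and follows essentially the same route as the paper: the forward inclusion via $\eta c=\lim_t \eta e_t c$ with $\eta e_t\in D\subset C$ and norm-closedness of $C$, and the reverse inclusion by observing that $(\eta e_t)$ and $(e_t\eta)$ lie in $D$ and converge weak* to $p\eta p=\eta$, placing $\eta$ in $D^{\perp\perp}\cong D^{**}$. The extra bookkeeping you do with the corner identifications $D^{**}=pA^{**}p$ and $C^{**}=pB^{**}p$ is sound but is left implicit in the paper's shorter argument.
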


\begin{proof}
Suppose that $\eta \in M(D)$ and $c \in C$.  Then $e_tc \rightarrow c$, where $(e_t)$ is a contractive approximate identity of $D$ and $C$.
Then $\eta c = \lim_t \eta e_t c$, which must lie in $C$, since $\eta e_t \in D \subset C$.  Similarly, $c \eta \in C$, so that $M(D)$ is contained in $\{\eta \in M(C): \eta D \subset D \mbox{ and } \eta D \subset D\}$. On the other hand, if $\nu$ belongs to this set, then $D$ contains the nets $(\nu e_t)$ and $(e_t \nu)$ which converge weak* to $p \nu = \nu = \nu p$, so that $\nu \in M(D)$.
\end{proof}

More generally, suppose that $C$ is given a priori as a hereditary $C^*$-subalgebra of $B$ with support projection $p$ and  $D = A \cap C$.  Then $D$ is still a closed subalgebra
of $A$, but not necessarily a hereditary subalgebra of $A$, nor does it necessarily possess an approximate identity.  Nonetheless, we may still consider the operator algebra $$M(C:D) := \{\eta \in M(C): \eta D \subset D \mbox{ and } \eta D \subset D\},$$  and in the case that $D$ is a hereditary subalgebra of $A$ with support projection $p$, then $M(C:D) = M(D)$.

The \emph{strict topology on $M(C)$} is defined to be the topology generated by seminorms of the form $\eta \mapsto \|\eta b\| + \|c \eta\|$, for $\eta \in M(C)$ and $b,c \in C$.  We denote this topology by $\beta_C$.  In general, the strict topology on $M(C)$ is weaker than the norm topology, and so strictly continuous linear functionals on $M(C)$ are uniformly continuous.

The main result of this paper is the following theorem, which is a noncommutative version Theorem \ref{Wellsthm}.

\begin{theorem}\label{thm:intro}Let $A$ be a unital subalgebra of a $C^*$-algebra $B$ and let $p$ be a projection which is open in $B^{**}$.  Let $C$ be the hereditary $C^*$-subalgebra of
$B$ supported by $p$ and set $D = C \cap A$.  The following are equivalent:
\begin{enumerate}
\item $p$ is open in $A^{**}$;
\item $M(C:D)$ equals the $\beta_C$-closure of $D$ in $M(C)$;
\item $p$ lies in the $\beta_C$-closure of $D$ inside $M(C)$.
\end{enumerate}
Moreover, if any of the above equivalent conditions hold, then $D$ is a hereditary subalgebra of $A$ and $M(C:D) = M(D)$.
\end{theorem}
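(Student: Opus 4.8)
The plan is to prove the cycle $(1)\Rightarrow(2)\Rightarrow(3)\Rightarrow(1)$ and then read off the ``Moreover'' clause for free. A preliminary identity underlies everything: since $C = pB^{**}p\cap B$, every $d\in D = C\cap A$ satisfies $d = pdp$, and as $d\in A\subseteq A^{**}$ this gives $d\in pA^{**}p$; hence $D = pA^{**}p\cap A$. Consequently, the moment we know $p$ is open in $A^{**}$, the algebra $D$ is exactly the hereditary subalgebra of $A$ supported by $p$, it carries the contractive approximate identity $(e_t)$ supplied by \cite{BHN} (which is also an approximate identity for $C$), and the Lemma proved above identifies $M(C:D)$ with $M(D)$. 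This simultaneously settles the final assertion of the theorem, so the real work lies in the three numbered equivalences.

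For $(1)\Rightarrow(2)$ I would establish two inclusions. The inclusion $\overline{D}^{\beta_C}\subseteq M(C:D)$ is formal: if $d_\lambda\to\eta$ in $\beta_C$ with $d_\lambda\in D$, then for each $d\in D\subseteq C$ the defining seminorms force $\|(\eta-d_\lambda)d\|\to0$ and $\|d(\eta-d_\lambda)\|\to0$, so $\eta d,\,d\eta\in\overline{D}=D$, and together with $\eta\in M(C)$ this gives $\eta\in M(C:D)$. The reverse inclusion is the substance. Given $\eta\in M(C:D)$, the natural approximants are $\eta e_t\in D$, and I would verify $\eta e_t\to\eta$ in $\beta_C$ via the two estimates $\|(\eta-\eta e_t)c\| = \|\eta(c-e_tc)\|\le\|\eta\|\,\|c-e_tc\|$ and $\|c(\eta-\eta e_t)\| = \|(c\eta)(p-e_t)\|$, both tending to $0$ because $c\eta,\,c\in C$ and $(e_t)$ is an approximate identity for the \emph{whole} of $C$. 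I regard the fact that the approximate identity of the nonselfadjoint, possibly order-free algebra $D$ doubles as an approximate identity for the ambient $C^*$-algebra $C$ as the true engine here; this is precisely the Glicksberg-type input, and it is what makes $(1)\Rightarrow(2)$ the difficult implication in disguise once that input is taken as known.

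The implication $(2)\Rightarrow(3)$ is immediate upon noting $p\in M(C:D)$: indeed $p$ is the identity of $M(C)$, and for $d\in D\subseteq C$ one has $pd=pdp=d=dp$, so $pD\subseteq D$ and $Dp\subseteq D$. Hence $p\in M(C:D)=\overline{D}^{\beta_C}$.

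The return $(3)\Rightarrow(1)$ is where I expect the remaining difficulty. Fix a net $(d_\alpha)$ in $D$ with $d_\alpha\to p$ in $\beta_C$. First I would show $(d_\alpha)$ is norm-bounded: for each $c\in C$ the net $(d_\alpha c)$ converges and hence is bounded, so the left-multiplication operators $L_{d_\alpha}$ on $C$ are pointwise bounded, and the uniform boundedness principle gives $\sup_\alpha\|d_\alpha\| = \sup_\alpha\|L_{d_\alpha}\| < \infty$. Next I would upgrade strict convergence to weak* convergence: for $\psi\in C^*$, Cohen factorization (available since $C$ has a bounded approximate identity) writes $\psi(x)=\psi'(xc)$ for some $\psi'\in C^*$, $c\in C$, and then $\psi(d_\alpha)=\psi'(d_\alpha c)\to\psi'(pc)=\psi(p)$, so $d_\alpha\to p$ weak* in $C^{**}=pB^{**}p$, and therefore in $B^{**}$. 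Since every functional on $A$ extends to $B$, the same net converges weak* to $p$ inside $A^{**}$; and because $D = C\cap A\subseteq pA^{**}p\cap A$, this places $p$ in $(pA^{**}p\cap A)^{\perp\perp}$, which is exactly condition (1) of the characterization of \cite{BHN}, so $p$ is open in $A^{**}$. The main obstacle I anticipate is this passage from the purely multiplicative control furnished by the strict topology to a genuine weak* limit landing in the Blecher--Hay--Neal criterion, together with securing boundedness of the approximating net.
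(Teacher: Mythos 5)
Your proposal is correct, and two of the three implications track the paper exactly: your $(2)\Rightarrow(3)$ is the paper's one\,--\,line observation that $p=1_{M(C)}\in M(C:D)$, and your $(3)\Rightarrow(1)$ is the same argument as the paper's (factor a functional on $C$ as $c_0\lambda_0$ via Cohen factorization and use strict convergence to upgrade to weak* convergence; the paper phrases it as a contradiction, you phrase it directly, and your Banach--Steinhaus boundedness step is harmless but not actually needed). The genuine divergence is in $(1)\Rightarrow(2)$, which the paper calls the difficult implication. The paper proves $M(C:D)\subseteq\overline{D}^{\beta_C}$ by Hahn--Banach separation: a $\beta_C$-continuous functional killing $D$ is written via Taylor's theorem as $c_0\lambda_0 b_0$, converted into $\rho\in D^\perp$ with $q\rho=\rho q=0$, decomposed via the Glicksberg-type Lemma \ref{hsatech} as $\mu+\psi_1q+q\psi_2$ with $\mu\in A^\perp$, and then shown to lie in $A^\perp$, contradicting $\rho(1)=1$. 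You instead exhibit explicit approximants $\eta e_t\in D$ and check strict convergence directly. Your argument is valid, but be aware that its entire weight rests on the assertion that the contractive approximate identity $(e_t)$ of $D$ is also an approximate identity for the larger $C^*$-algebra $C$ (not merely for $D$ or for $C^*(D)$): without that, $\|e_tb-b\|\to0$ for general $b\in C$ fails and the first seminorm estimate collapses. That fact is exactly what the paper asserts, citing \cite{BHN}, in the paragraph preceding its first Lemma, and it is obtainable by a Mazur/convex-combination argument from $e_t\to p$ weak*; so your shortcut is legitimate on the paper's own terms, though you slightly mislabel it as ``the Glicksberg-type input'' --- the Glicksberg-type results in the paper are the orthogonal-measure decompositions of $J^\perp$ and $D^\perp$, which your route never touches. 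What the paper's longer route buys is precisely those decompositions (Lemmas \ref{quotientnorm}, \ref{tech}, \ref{hsatech}), which are of independent interest for peak interpolation; what your route buys is a proof of the main theorem that is an order of magnitude shorter, at the price of importing the common-cai fact from \cite{BHN} as a black box.
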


See Section \ref{main} for the proof.

\begin{corollary}\label{cor:main}
Let $A$ be closed subalgebra of a $C^*$-algebra $B$ such that $A$ generates $B$ as a $C^*$-algebra.  If $A$ has a contractive approximate identity, then $M(A)$ is the $\beta_B$-closure of $A$ inside $M(B)$.
\end{corollary}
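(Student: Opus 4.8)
The plan is to deduce the corollary from Theorem \ref{thm:intro} by taking $p$ to be the support projection of the approximate identity, showing that the generation hypothesis forces the hereditary structure on the $C^*$-side to collapse to all of $B$, and then reading off condition (2).

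First I would fix a contractive approximate identity $(e_t)$ for $A$ and let $p = 1_{A^{**}}$ be its weak* limit in $B^{**}$. The first substantive claim is that $C = B$, equivalently that $p = 1_{B^{**}}$. To see this I would check that $p$ is a two-sided identity for $B$. Since $e_t a \to a$ and $a e_t \to a$ in norm for $a \in A$, passing to weak* limits gives $pa = ap = a$; taking adjoints yields $pa^* = a^* = a^* p$, so $p$ fixes every element of $A \cup A^*$, hence every word in such elements. Because one-sided multiplication by the fixed element $p$ is norm continuous and $A$ generates $B$, it follows that $pb = bp = b$ for every $b \in B$, and then separate weak* continuity of multiplication in the von Neumann algebra $B^{**}$ forces $p = 1_{B^{**}}$. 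Consequently the hereditary $C^*$-subalgebra $C = pB^{**}p \cap B$ equals $B$.

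Next, to place myself in the hypotheses of Theorem \ref{thm:intro}, whose statement calls for a unital ambient pair, I would pass to the minimal unitizations $A^1 = A + \mathbb{C}1$ and $B^1 = B + \mathbb{C}1$. The projection $p = 1_{B^{**}}$ is open in $(B^1)^{**}$, being the weak* limit of an increasing approximate identity of the ideal $B \subseteq B^1$; the hereditary $C^*$-subalgebra of $B^1$ it supports is again $C = B$, so that the multiplier algebra $M(C)$ appearing in the theorem is intrinsically $M(B)$ and $\beta_C$ is the strict topology $\beta_B$; and since $1 \notin B$ one computes $D := C \cap A^1 = B \cap (A + \mathbb{C}1) = A$. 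Condition (1) of Theorem \ref{thm:intro}, that $p$ be open in $(A^1)^{**}$, holds because $A$ is a two-sided ideal of $A^1$ carrying the approximate identity $(e_t)$, so $p$ is its support projection, as in the equivalences recalled from \cite{BHN}. Invoking the implication (1) $\Rightarrow$ (2) together with the final assertion $M(C:D) = M(D)$ of the theorem then gives that $M(A)$ equals the $\beta_B$-closure of $A$ in $M(B)$, which is exactly the corollary.

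I expect the main obstacle to be the identity $C = B$, that is $p = 1_{B^{**}}$: this is precisely where the generation hypothesis is used, and the argument needs care in propagating $pa = a$ from $A$ to $A^*$, then across products, and finally across norm limits. A secondary, purely bookkeeping point is ensuring $D = A$ after unitization; this is clean when $B$ is non-unital, whereas if $B$ is already unital the seminorm indexed by $1 \in B$ shows that $\beta_B$ coincides with the norm topology, so that the $\beta_B$-closure of $A$ is $A$ itself and the statement reduces to the fact that $A$ is then unital with $M(A) = A$.
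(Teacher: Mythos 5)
Your proof is correct and follows essentially the same route as the paper: pass to the unitizations $A^1\subseteq B^1$, use the generation hypothesis to identify the support projection $1_{A^{**}}$ with $1_{B^{**}}$ so that $C=B$ and $D=A$, note that $A$ is an HSA of $A^1$ so the projection is open in $(A^1)^{**}$, and apply Theorem \ref{thm:intro}. The paper compresses all of this into three sentences, so your version simply supplies the details (the propagation of $pa=a$ across adjoints, products, and limits, and the bookkeeping $D=A$) that the paper leaves to the reader.
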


\begin{proof}First, $A$ is a hereditary subalgebra of $A^1$, the unitization of $A$.  Since $A$ generates $B$, then the identity of $A^1$ is that of $B^1$, the unitization of $B$, and the HSA of $B$ associated with $1$ is just $B$.  The result follows immediately.
\end{proof}

\begin{corollary}Let $A$ be a unital subalgebra of a $C^*$-algebra $B$ and let $p$ be a projection which is open in $B^{**}$.  If $C$ is the HSA of $B$ supported by $p$ and $D = C \cap A$ has an unbounded or bounded approximate identity which is also an approximate identity for $C$, then $D$ has an approximate identity which is contractive.
\end{corollary}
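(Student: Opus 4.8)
The plan is to reduce the statement to Theorem \ref{thm:intro} by verifying its condition (3) directly from the hypothesized approximate identity. Let $(f_\lambda)$ denote the given approximate identity for $D$ which is simultaneously an approximate identity for $C$. Each $f_\lambda$ lies in $D \subset C \subset M(C)$, and since $p$ is the identity of $M(C)$ (so that $pc = c = cp$ for every $c \in C$, as used in the lemma above), I would first compute, for all $b,c \in C$, the $\beta_C$-seminorm of $f_\lambda - p$:
$$\|(f_\lambda - p)b\| + \|c(f_\lambda - p)\| = \|f_\lambda b - b\| + \|c f_\lambda - c\|,$$
which tends to $0$ precisely because $(f_\lambda)$ is a (two-sided) approximate identity for $C$.

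This shows that $f_\lambda \to p$ in the strict topology $\beta_C$, and as the net lies in $D$, the projection $p$ belongs to the $\beta_C$-closure of $D$ inside $M(C)$. That is exactly condition (3) of Theorem \ref{thm:intro}, so I would invoke that theorem to conclude that $p$ is open in $A^{**}$ and that $D = C \cap A$ is a hereditary subalgebra of $A$.

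Finally, I would appeal to the characterization recalled before Theorem \ref{thm:intro}: a projection open in $A^{**}$ supports a hereditary subalgebra $pA^{**}p \cap A = A \cap C = D$ which carries a contractive approximate identity $(e_t)$. This $(e_t)$ is then the desired contractive approximate identity for $D$, completing the argument.

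Since essentially all of the real work is already carried out in Theorem \ref{thm:intro} and the \cite{BHN} structure theory of hereditary subalgebras, I expect the corollary's own contribution to be only the seminorm computation above. The one point worth emphasizing is that the boundedness of $(f_\lambda)$ plays no role in the $\beta_C$-convergence: even a merely unbounded approximate identity for $C$ already places $p$ in the strict closure of $D$, and the upgrade to a contractive approximate identity is then supplied automatically by the equivalence in the main theorem.
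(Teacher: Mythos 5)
Your proposal is correct and follows essentially the same route as the paper: both verify condition (3) of Theorem \ref{thm:intro} by computing that the given approximate identity converges to $p$ in the $\beta_C$-topology, and then invoke the theorem to upgrade to a contractive approximate identity. Your write-up merely spells out the final appeal to the theorem and the irrelevance of boundedness more explicitly than the paper does.
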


\begin{proof}If $(e_t)$ is an approximate identity for $D$ which is also such for $C$, then for any $c \in C$, it follows that $e_tc$ and $c e_t$ converge to $pc = c =cp$ in norm.  Hence, $(e_t)$ converges to $p$ in the $\beta_C$-topology, so that $p$ lies in the $\beta_C$-closure of $D$ in $M(C)$.
\end{proof}

\section{Functionals orthogonal to operator algebras}
As before, let $B$ be a unital $C^*$-algebra and let $A \subset B$ be a unital  subalgebra.
We have the following completely isometric homomorphic relations:
$$A \subset A^{**} \cong A^{\perp\perp} \subset B^{**}.$$
We will also use the following lemma frequently and without explicitly mentioning when it is being used.

\begin{lemma}\label{weak}
Let $A$ be a unital subalgebra of a $C^*$-algebra $B$. If $q \in B^{**}$ is a
projection, then the following are equivalent:
\begin{enumerate}
 \item $q \in \overline{A}^{w*}.$
 \item $q \in A^{\perp \perp}.$
 \item $A^{\perp}$ is contained in $(qA)_{\perp}$ (resp. $(Aq)_{\perp}$).
 \item $A^\perp$ is contained in $(qAq)_{\perp}$.
 \item $\mu \in A^{\perp} \Rightarrow \mu q \in A^\perp$ (resp. $q \mu \in A^\perp$).
 \item $\mu \in A^{\perp} \Rightarrow q \mu q\in A^\perp$.
\end{enumerate}
\end{lemma}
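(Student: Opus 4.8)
The plan is to dispatch (1)$\Leftrightarrow$(2) as an instance of the bipolar theorem, to observe that (3)$\Leftrightarrow$(5) and (4)$\Leftrightarrow$(6) are nothing more than the definition of the module action of $B^{**}$ on $B^*$, and then to isolate the genuine content as the equivalence of (2) with (5) (in each of its two forms) and with (6). Since $B^{**}=(B^*)^*$ and $A$ is a linear subspace, the bipolar theorem identifies $A^{\perp\perp}$ with the $\sigma(B^{**},B^*)$-closure $\overline{A}^{w*}$, which is (1)$\Leftrightarrow$(2) at once.

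For the two translations, I would recall that the functionals $\mu q, q\mu, q\mu q\in B^*$ are determined by $(\mu q)(a)=\langle qa,\mu\rangle$, $(q\mu)(a)=\langle aq,\mu\rangle$ and $(q\mu q)(a)=\langle qaq,\mu\rangle$. Hence $(qA)_{\perp}$, $(Aq)_{\perp}$ and $(qAq)_{\perp}$ consist exactly of those $\mu$ for which $\mu q$, $q\mu$, respectively $q\mu q$, annihilate $A$. Reading off the definitions, the inclusion $A^{\perp}\subseteq (qA)_{\perp}$ is literally the implication $\mu\in A^{\perp}\Rightarrow \mu q\in A^{\perp}$, and likewise for the other two pairs, so (3)$\Leftrightarrow$(5) and (4)$\Leftrightarrow$(6) require no further work.

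For (2)$\Rightarrow$(5) I would fix a net $(a_i)$ in $A$ converging weak* to $q$, together with $\mu\in A^{\perp}$ and $a\in A$, and test against the module element $a\mu\in B^*$ (for the right-hand version, against $\mu a$). Because $A$ is a subalgebra, $a_i a\in A$, so $\langle a_i, a\mu\rangle=\mu(a_i a)=0$ for every $i$; passing to the limit against $a\mu\in B^*$ gives $(\mu q)(a)=\langle q, a\mu\rangle=\lim_i \mu(a_i a)=0$. As $a$ ranges over $A$ this yields $\mu q\in A^{\perp}$, the right-hand statement is symmetric, and applying the two one-sided conclusions in succession produces $q\mu q\in A^{\perp}$, which is (6).

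The reverse implications are the crux, and this is exactly where unitality of $A$ enters. Since $1\in A$ and $q^{2}=q$, evaluation at the unit collapses each module element to $\mu(q)$: indeed $(\mu q)(1)=(q\mu)(1)=(q\mu q)(1)=\mu(q)$. Thus if (5) or (6) holds, then for every $\mu\in A^{\perp}$ the associated functional lies in $A^{\perp}$ and so vanishes at $1$, forcing $\mu(q)=0$; as this holds for all $\mu\in A^{\perp}$ we conclude $q\in A^{\perp\perp}$, i.e.\ (2). The only points demanding care are the bookkeeping of the left/right module conventions and the verification that $a\mu$ and $\mu a$ genuinely belong to $B^*$ (so that weak* convergence applies), together with the identity $\langle q, a\mu\rangle=\langle qa,\mu\rangle$, which follows from separate weak*-continuity of the product on the von Neumann algebra $B^{**}$; the essential idea, however, is simply that compressing by the unit recovers $\mu(q)$.
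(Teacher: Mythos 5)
Your proposal is correct, and it agrees with the paper's argument on most points: the bipolar theorem for (1)$\Leftrightarrow$(2), the observation that (3)$\Leftrightarrow$(5) and (4)$\Leftrightarrow$(6) are just unwindings of the dual module action, and the forward implication (2)$\Rightarrow$(5), where your pairing of a weak*-convergent net $(a_i)\to q$ against $a\mu$ is the net-formulated version of the paper's remark that $a\mu\in A^\perp$ (because $A$ is a subalgebra) and hence vanishes at $q\in A^{\perp\perp}$. Where you genuinely diverge is the reverse implication. The paper proves (3)$\Rightarrow$(2) by passing to annihilators: from $A^\perp\subseteq (qA)_\perp$ it deduces $\bigl((qA)_\perp\bigr)^\perp\subseteq A^{\perp\perp}$ and then identifies $\bigl((qA)_\perp\bigr)^\perp=\overline{qA}^{w*}=q\,\overline{A}^{w*}\ni q$ using unitality. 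You instead evaluate the module elements at the unit, noting $(\mu q)(1)=(q\mu)(1)=\mu(q)$ and $(q\mu q)(1)=\mu(q^2)=\mu(q)$, so that (5) or (6) forces $\mu(q)=0$ for every $\mu\in A^\perp$, i.e.\ $q\in A^{\perp\perp}$. Your route is more elementary and makes the role of unitality (and, for (6), of idempotency of $q$) completely transparent, at the cost of not recording the identity $\bigl((qA)_\perp\bigr)^\perp=q\,\overline{A}^{w*}$, which is mildly informative in its own right. Both arguments are sound; no gaps.
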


\begin{proof}
  The equivalence of (1) and (2) is a standard result of functional
  analysis.  For (2) implies (3), suppose (2) holds.  By
  hypothesis, $\psi(q) = 0$ for all $\psi \in
  A^{\perp}$.  Let $\varphi \in A^{\perp}$.  Then for each $a \in A$,
  $a \varphi  \in A^{\perp}.$  Thus $\varphi(qa) = 0$ for all $a
  \in A$.  Hence $\varphi \in (qA)_{\perp}$, which gives (3).
  For (3) implies (2), we assume that $A^\perp \subset (qA)_{\perp}$, so that $((qA)_{\perp})^{\perp}
  \subset A^{\perp \perp}$.  However, $((qA)_{\perp})^{\perp} =
  \overline{qA}^{w^*} = q \overline{A}^{w^*}$ which must contain $q$
  since $A$ is unital. Hence, (2) holds.
  For (3) implies (4), suppose that $\varphi \in A^\perp$. Then by (3), $\varphi q \in A^\perp$.  By applying the other condition in (3) to $\varphi q$, we then have $q \varphi q \in A^\perp$.  The proof that (4) implies (2) is similar to that of (3) implies (2).
 The equivalence of (3) and (5) is trivial, as well as that of (4) and (6).
\end{proof}

Suppose $J$ is a closed right ideal of $A$ with a left contractive approximate identity $(e_t)$.  For such $J$ there exists an open projection $p$ in $B^{**}$ which is open in $A^{**}$, so that $J = A \cap pA^{**}$.  Moreover, $(e_t)$ converges to $p$ in the weak* topology.  Since $p$ is an open projection in $B^{**}$, it supports a closed right ideal $I = B \cap p B^{**}$ of $B$.  It is easy to see that $I$ contains $J$ and $(e_t)$ is a left contractive approximate identity for $I$.

If $\varphi \in (qB)^*$, then associated to $\varphi$ is element an of
$B^*$ defined by $x \mapsto \varphi(qx)$.  We denote this naturally associated map by
$\varphi q$.
The set $(qB)^*q = \{\varphi q: \varphi \in (qB)^*\}$ is a subspace of $B^*$.  We may also consider the spaces $(Bq)^*$ and $q(Bq)^*$ analogously.

The following lemmas are the noncommutative versions of some results due to Glicksberg (\cite{Glick62}) and are closely related to the subject of peak interpolation for operator algebras.

\begin{lemma}\label{quotientnorm}
Let $q \in B^{**}$ be a closed projection such that $\mu q \in A^\perp$ for every $\mu \in A^\perp$.
Then
$$\|\varphi + (qA)^\perp\| \le \|\varphi q + A^\perp\|,$$
for all $\varphi \in (qB)^*$.
\end{lemma}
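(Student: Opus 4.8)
The plan is to convert both quotient norms into restriction (operator) norms and then to reverse the "obvious" inequality, which at first points the wrong way, by feeding a Hahn--Banach extension into the orthogonality hypothesis on $q$. I would begin from the standard duality identity that for a subspace $S$ of a Banach space $Y$ and $y^*\in Y^*$ one has $\|y^*+S^\perp\|_{Y^*/S^\perp}=\|y^*|_S\|$. Applying this with $Y=qB$, $S=qA$, $y^*=\varphi$ gives
$$\|\varphi+(qA)^\perp\|=\|\varphi|_{qA}\|=\sup\{|\varphi(qa)|:a\in A,\ \|qa\|\le 1\},$$
and applying it with $Y=B$, $S=A$, $y^*=\varphi q$ gives
$$\|\varphi q+A^\perp\|=\|(\varphi q)|_A\|=\sup\{|\varphi(qa)|:a\in A,\ \|a\|\le 1\}=:c.$$
Thus the lemma becomes the factorization estimate $|\varphi(qa)|\le c\,\|qa\|$ for every $a\in A$: the functional $a\mapsto\varphi(qa)$ on $A$, which has norm $c$ with respect to $\|a\|$, is asserted to obey the same bound against the smaller quantity $\|qa\|$.

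I expect the main obstacle to be precisely that the naive estimate runs backwards. Factoring $a\mapsto\varphi(qa)$ through the contraction $T\colon A\to qA$, $a\mapsto qa$, only yields $c=\|(\varphi|_{qA})\circ T\|\le\|\varphi|_{qA}\|$ since $\|T\|\le 1$, which is the reverse of what is wanted; the content of the lemma is exactly that $T$ is a \emph{metric} surjection, and this is where the hypothesis $\mu q\in A^\perp$ must be used. The key step I would carry out is as follows. Since $(\varphi q)|_A$ has norm $c$ on $A$, use the Hahn--Banach theorem to extend it to some $\tilde\psi\in B^*$ with $\|\tilde\psi\|=c$. Then $\mu:=\tilde\psi-\varphi q$ annihilates $A$, so $\mu\in A^\perp$, and the hypothesis gives $\mu q\in A^\perp$. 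Because $q^2=q$ one has $(\varphi q)q=\varphi q$, so $\mu q=\tilde\psi q-\varphi q$, and hence $\tilde\psi q-\varphi q\in A^\perp$.

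Reading off this membership on $A$ yields $\tilde\psi(qa)=\varphi(qa)$ for every $a\in A$, whence
$$|\varphi(qa)|=|\tilde\psi(qa)|\le\|\tilde\psi\|\,\|qa\|=c\,\|qa\|,$$
which is the desired factorization estimate, and combined with the two duality identities this gives $\|\varphi+(qA)^\perp\|\le\|\varphi q+A^\perp\|$. The only genuinely nontrivial ingredient is this extension-and-twist maneuver: replacing the representative $\varphi q$ by the $q$-twist $\tilde\psi q$ of a norm-minimal extension keeps us in the same coset modulo $A^\perp$ (this is exactly what $\mu q\in A^\perp$ buys) while allowing the norm to be measured against $\|qa\|$ rather than $\|a\|$. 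I do not expect to need the closedness of $q$ for this particular inequality; by Lemma \ref{weak} the orthogonality hypothesis already records that $q\in\overline{A}^{w*}$, and that is the only structure the argument uses.
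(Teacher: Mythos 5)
Your reduction of the lemma to the estimate $|\varphi(qa)|\le c\,\|qa\|$ via the two duality identities is correct, and the Hahn--Banach idea is attractive, but the pivotal step --- ``because $q^2=q$ one has $(\varphi q)q=\varphi q$'' --- is a genuine gap, and it occurs at exactly the point where the closedness of $q$ (which you explicitly claim not to need) must enter. The product $(\varphi q)q$ appearing in $\mu q=\tilde\psi q-(\varphi q)q$ is computed through the canonical weak*-continuous extension $\widehat{\varphi q}$ of the functional $\varphi q\in B^*$ to $B^{**}$, so that $((\varphi q)q)(x)=\widehat{\varphi q}(qx)$. There is no reason for $\widehat{\varphi q}(qx)$ to equal $\varphi(qx)$: the map $\eta\mapsto\varphi(q\eta)$ is a bounded but in general not weak*-continuous extension of $\varphi q$, so the formal cancellation $q\cdot q=q$ does not apply to the normal extension. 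Concretely, take $B=A=C[0,1]$, $q=\chi_{[0,1/2)}\in B^{**}$ (not closed), and $\varphi\in(qB)^*$ defined by $qf\mapsto f(1/2)$ (well defined and contractive since $\|qf\|=\sup_{[0,1/2)}|f|=\sup_{[0,1/2]}|f|$). Then $\varphi q=\delta_{1/2}$, whose normal extension kills $qB$, so $(\varphi q)q=0\neq\varphi q$; here $A^\perp=\{0\}$, $\tilde\psi=\varphi q$, and your argument would conclude $\varphi(qa)=\widehat{\delta_{1/2}}(qa)=0$ for all $a$, contradicting $\varphi(q1)=1$. So the identity is not formal, and your closing assertion that only $q\in\overline{A}^{w*}$ is used is incorrect.

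What makes the identity true when $q$ is closed (and $B$ is unital) is the existence of a net $(f_t)\subset{\rm Ball}(B)$ with $f_t\to q$ weak* and $qf_t=q$ (take $f_t=1-e_t$ for an increasing approximate identity $(e_t)$ of the hereditary subalgebra supported by the open projection $q^\perp$); then $\widehat{\varphi q}(qx)=\lim_t(\varphi q)(f_tx)=\lim_t\varphi(qf_tx)=\varphi(qx)$. This net is precisely the device the paper uses, though deployed differently: the paper fixes $\psi\in A^\perp$, uses proximinality of the right ideal supported by $q^\perp$ to express $\|\varphi+\psi q\|_{(qB)^*}$ as a supremum over ${\rm Ball}(B)$, and then applies $(f_t)$ to bound that supremum by $\|\varphi q+\psi\|_{B^*}$. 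If you insert the net argument to justify $(\varphi q)q=\varphi q$, your route does go through and is a legitimate alternative that trades the proximinality step for a Hahn--Banach extension; but as written the proof is incomplete at its one nontrivial point.
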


\begin{proof}
 Let $\varphi \in (qB)^*$ and  $\psi \in A^{\perp}$.  Then $\psi q \in A^{\perp}$ and we may also regard
$\psi q$ as an element of $(qB)^*$ (by viewing $\psi q$ as acting on $B^{**}$ and then restricting to
the subspace $qB$). Regarding $\psi q$ as such, we also have $\psi q \in (qA)^{\perp} \subset (qB)^*$.
If $I$ is the right ideal in $B$
supported by $q^\perp$, then for $qb \in {\rm  Ball}(qB)$, we have
$\|qb\| = \|b + I\|$.  Since right ideals are
proximinal in a $C^*$-algebra, it follows that there exists $a \in I$
such that $\|qb\|= \|b+I\| = \|b+a\|$.  Since $q(b+a) = qb$ and
$\|b+a\| \le 1$, by replacing $b$ with $b+a$, it
follows that
\begin{equation*}\| \varphi + \psi q \|_{(qB)^*} =  \sup \{ |
  \varphi(qb) + \psi (qb)| : b \in {\rm Ball}(B) \}.
\end{equation*}
However, taking a net $(f_t) \subset \mbox{Ball}(B)$ converging to $q$ weak* with $qf_t = q$,
for $b \in {\rm Ball}(B)$, we have
\begin{eqnarray*} | \varphi(qb) + \psi (qb)| & = & \lim_t |
  \varphi(qf_tb) + \psi (f_tb)| \\
& \le & \|\varphi q + \psi\|_{B^*}.
\end{eqnarray*}
The last inequality follows from the inequality $|\varphi(qf_tb) + \psi(f_tb)| \le  \|\varphi q + \psi\|_{B^*} \|f_tb\| $.
Hence, $\| \varphi + \psi q \|_{(qB)^*} \le \|\varphi q + \psi\|_{B^*},$
and thus,
$\|\varphi + (qA)^{\perp}\| \le \| \varphi q + \psi\|_{B^*}.$
Taking the infimum over all $\psi \in A^\perp$ yields $\|\varphi +
(qA)^{\perp}\| \le \| \varphi q + A^\perp \|$.
\end{proof}

\begin{lemma}\label{tech}Let $q$ be a closed projection in $B^{**}$ such that
$\mu q \in A^\perp$ for all $\mu \in A^\perp$.  If
$J = q^{\perp}A^{**} \cap A$, then $J^{\perp} = A^{\perp} + (qB)^*q$.
\end{lemma}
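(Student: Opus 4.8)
The plan is to prove the two inclusions separately, the containment $A^\perp + (qB)^*q \subseteq J^\perp$ being routine and the reverse being the substance. For the easy direction, note first that $A^\perp \subseteq J^\perp$ simply because $J \subseteq A$; and if $\varphi \in (qB)^*$ and $j \in J = q^\perp A^{**}\cap A$, then $qj = 0$, so $(\varphi q)(j) = \varphi(qj) = 0$, which gives $(qB)^*q \subseteq J^\perp$. Both summands therefore annihilate $J$.

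For the reverse inclusion I would reformulate the problem dually. Since $J = \{a \in A : qa = 0\}$ is the kernel of the bounded map $a \mapsto qa$, it is norm closed and $A/J$ is a Banach space; moreover the induced map $\Theta \colon A/J \to qB$, $a + J \mapsto qa$, is a well defined injective contraction taking values in the Banach space $qB$ (recall $\|qb\| = \|b + I\|$, so $qB$ is isometric to $B/I$ and complete). Corestricting to the closure $Y := \overline{qA}$ gives an injective contraction $\theta \colon A/J \to Y$ with dense range. Now a functional $\psi \in J^\perp$ descends to some $\bar\psi \in (A/J)^*$, and if I can write $\bar\psi = \theta^*(h)$ for some $h \in Y^* = (qA)^*$ and extend $h$ to $\varphi \in (qB)^*$ by Hahn--Banach, then $(\varphi q)(a) = \varphi(qa) = \psi(a)$ for all $a \in A$, whence $\psi - \varphi q \in A^\perp$ and $\psi = (\psi - \varphi q) + \varphi q$ lies in $A^\perp + (qB)^*q$. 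Thus the whole matter reduces to showing that $\theta^*$ is surjective.

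The main obstacle, and the precise place where Lemma \ref{quotientnorm} enters, is exactly this surjectivity: without a norm estimate one only concludes that $J^\perp$ is the weak*-closure of $A^\perp + (qB)^*q$ (via the bipolar theorem, since the preannihilator of the sum is $J$), because $(qB)^*q$ need not itself be weak*-closed. To remove the closure I would show that $\theta^*$ is isometric. Using the isometric identifications $(qB)^*/(qA)^\perp \cong (qA)^*$ and $B^*/A^\perp \cong A^*$, and the fact that $(\varphi q)|_A$ descends to $A/J$, the inequality of Lemma \ref{quotientnorm}, namely $\|\varphi + (qA)^\perp\| \le \|\varphi q + A^\perp\|$, translates for an arbitrary $h \in (qA)^*$ with Hahn--Banach extension $\varphi$ into $\|h\| \le \|\theta^* h\|$; the reverse inequality $\|\theta^* h\| \le \|h\|$ is automatic since $\theta$ is contractive. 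Hence $\theta^*$ is isometric, its range is norm closed, and by the closed range theorem $\operatorname{ran}\theta^* = (\ker\theta)^\perp = (A/J)^*$ because $\theta$ is injective. This yields the required surjectivity, completes the reverse inclusion, and therefore establishes $J^\perp = A^\perp + (qB)^*q$.
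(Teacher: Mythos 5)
Your proof is correct, and both inclusions are handled soundly, but you reach the hard inclusion by a genuinely different route from the paper. The paper splits the work in two: it first shows that $A^{\perp} + (qB)^*q$ is weak*-dense in $J^{\perp}$ by computing the preannihilator, and then shows the sum is already weak*-closed via the Krein--Smulian theorem --- for a net $(\mu_t + \nu_t q)$ in the unit ball converging weak* to $\varphi$, Lemma \ref{quotientnorm} produces $\psi_t \in (qA)^{\perp}$ with $\|\nu_t - \psi_t\| \le 2$, a weak* limit point $\nu$ of $(\nu_t - \psi_t)$ is extracted, and the identity $\mu_t + \psi_t q = (\mu_t + \nu_t q) - (\nu_t - \psi_t)q$ forces $\varphi - \nu q$ into $A^{\perp}$. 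You instead decompose an arbitrary $\psi \in J^{\perp}$ directly, by reading Lemma \ref{quotientnorm} as the statement that the adjoint of the contraction $\theta\colon A/J \to \overline{qA}$, $a + J \mapsto qa$, is an isometry, and then invoking the closed range theorem to get surjectivity of $\theta^*$. The two arguments are powered by exactly the same estimate, so this is a repackaging rather than a new idea, but your version is net-free and more conceptual: it isolates what Lemma \ref{quotientnorm} really says (that $\theta$ is a metric surjection, hence an isometric isomorphism of $A/J$ onto $\overline{qA}$), and it yields as a byproduct that $qA$ is norm closed --- the Glicksberg-type fact hiding behind the paper's compactness argument. What the paper's version buys is self-containedness: it needs only Krein--Smulian and weak* compactness of bounded sets, not the closed range theorem, and it never has to set up the chain of isometric identifications $(A/J)^* \cong J^{\perp}$, $Y^* \cong (qB)^*/(qA)^{\perp}$ and $A^* \cong B^*/A^{\perp}$ on which your norm computation rests (all of which are standard, and which you do invoke correctly).
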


\begin{proof}
Let $\rho + \mu q \in   A^{\perp} + (qB)^*q$, where $\rho \in A^\perp$ and
$\mu \in (qB)^*$.  Then for $a \in J$, $\rho(a) =0$ and $(\mu q)(a) = \mu(qa) = \mu(0) = 0.$  Thus,
$A^{\perp} + (qB)^*q$ is contained in $J^{\perp}$.

We now show that the weak* closure of $A^{\perp} + (qB)^*q$ is all of $J^{\perp}$.  So suppose that
$b \in (A^{\perp} + (qB)^*q)_{\perp}$.  Then $\rho (b) = 0$ for all $\rho \in A^{\perp}$ and
$\mu(qb) = (\mu q)(b) = 0$ for all $\mu \in (qB)^*$.  Thus, $b \in (A^{\perp})_{\perp} = A$, and $qb = 0$, so that
$b \in J$.  Therefore, $(A^{\perp} + (qB)^*q)_{\perp} \subset J$, from which it follows that
$A^{\perp} + (qB)^*q$ is weak* dense in $J^{\perp}$.

To finish the proof it now suffices to show that $A^{\perp} + (qB)^*q$ is weak* closed.  By Goldstine's
theorem, we only need to show that the unit ball of $A^{\perp} + (qB)^*q$ is weak* closed.  Suppose that
$\varphi$ is the weak* limit of a net $(\mu_t+\nu_t q)$ in the unit ball of $A^{\perp} + (qB)^*q$, where
$\mu_t \in A^\perp$ and $\nu_t \in (qB)^*$. By Lemma \ref{quotientnorm},
$$\|\nu_t + (qA)^\perp\| \le \| \nu_tq + A^\perp\|.$$
Therefore, $\|\nu_t + (qA)^{\perp}\| \le \|\nu_tq + \mu_t\|$, and there exists
$\psi_t \in (qA)^{\perp}$ such that
$$\|\nu_t - \psi_t\| < \|\nu_t + (qA)^{\perp}\| + 1 \le \|\nu_tq + \mu_t\| + 1 \le 2.$$
By weak* compactness, $(\nu_t - \psi_t)$ has a limit point $\nu$ in $(qB)^*$.  However,
$$\mu_t + \psi_tq = (\mu_t+\nu_tq)-(\nu_tq-\psi_tq) = (\mu_t+\nu_tq)-(\nu_t-\psi_t)q,$$
so that a subnet of $(\mu_t + \psi_t q)$
converges to $\varphi-\nu q$.  Since $\psi_t \in (qA)^\perp$, it follows that
$\psi_t q \in A^\perp$, so that $\varphi-\nu q$ lies in $A^\perp$.  Therefore,
$\varphi \in A^\perp + (qB)^*$, which shows that $A^\perp + (qB)^*$ is weak* closed.
\end{proof}

The following is an HSA variant of the previous lemma.

\begin{lemma}\label{hsatech}Let $q$ be a closed projection in $B^{**}$ such that
$\mu q \in A^\perp$ for all $\mu \in A^\perp$.  If
$D = q^{\perp}A^{**}q^{\perp} \cap A$, then $D^{\perp} = A^{\perp} + (qB)^*q + q(Bq)^*$.
\end{lemma}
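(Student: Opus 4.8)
Write $S = A^{\perp} + (qB)^{*}q + q(Bq)^{*}$. By the bipolar theorem it suffices to establish three things: that $S \subseteq D^{\perp}$, that $S_{\perp} \subseteq D$, and---the substantive point---that $S$ is already weak* closed. The first two give $D^{\perp} \subseteq (S_{\perp})^{\perp} = \overline{S}^{w*}$, which together with $S \subseteq D^{\perp}$ (and the fact that $D^{\perp}$ is weak* closed) yields $\overline{S}^{w*} = D^{\perp}$, and weak* closedness of $S$ then forces $S = D^{\perp}$. The containment $S \subseteq D^{\perp}$ is immediate: every $a \in D$ satisfies $qa = aq = 0$, so $\rho(a) = 0$, $(\mu q)(a) = \mu(qa) = 0$ and $(q\lambda)(a) = \lambda(aq) = 0$ for $\rho \in A^{\perp}$, $\mu \in (qB)^{*}$, $\lambda \in (Bq)^{*}$. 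The containment $S_{\perp} \subseteq D$ is the exact analogue of the opening paragraph of Lemma~\ref{tech}: a $b$ annihilating $A^{\perp}$ lies in $A$, and annihilating all $\mu q$ (resp.\ all $q\lambda$) forces $qb = 0$ (resp.\ $bq = 0$), whence $b \in D$.

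For the weak* closedness I would introduce the two module maps $E, F : B^{*} \to B^{*}$ given by $Ef = fq$ (that is, $x \mapsto f(qx)$) and $Ff = qf$ (that is, $x \mapsto f(xq)$). These are commuting contractive idempotents with $\mathrm{Range}(E) = (qB)^{*}q$ and $\mathrm{Range}(F) = q(Bq)^{*}$. Two facts make them useful. First, each range is weak* closed, because $(qB)^{*}q = \mathcal{J}^{\perp}$ and $q(Bq)^{*} = (\mathcal{J}')^{\perp}$, where $\mathcal{J} = \{b \in B : qb = 0\}$ and $\mathcal{J}' = \{b \in B : bq = 0\}$ are the closed right and left ideals of $B$ determined by $q$; the nontrivial inclusion uses proximinality of one-sided ideals in a $C^{*}$-algebra, exactly as in Lemma~\ref{quotientnorm}, to see that a functional in $\mathcal{J}^{\perp}$ descends to a bounded functional on $qB \cong B/\mathcal{J}$. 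Second, for $\varphi \in S$ written as $\varphi = \mu + \nu q + q\lambda$ with $\mu \in A^{\perp}$, one has $(1-E)(1-F)\varphi = (1-E)(1-F)\mu = \mu - \mu q - q\mu + q\mu q \in A^{\perp}$: the summands $\nu q$ and $q\lambda$ are annihilated since they are fixed by $E$ and by $F$ respectively, while $\mu q, q\mu, q\mu q \in A^{\perp}$ by Lemma~\ref{weak}, the hypothesis $\mu q \in A^{\perp}$ being equivalent to $q \in A^{\perp\perp}$.

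Combining the two facts yields, for every $\varphi \in S$, an explicit \emph{bounded} decomposition $\varphi = \alpha + \beta + \gamma$ with $\gamma = F\varphi \in q(Bq)^{*}$, $\beta = E(1-F)\varphi \in (qB)^{*}q$ and $\alpha = (1-E)(1-F)\varphi \in A^{\perp}$, satisfying $\|\gamma\| \le \|\varphi\|$, $\|\beta\| \le 2\|\varphi\|$ and $\|\alpha\| \le 4\|\varphi\|$. I would then invoke Krein--Smulian. The set $K = \{\alpha + \beta + \gamma : \alpha \in A^{\perp},\ \beta \in (qB)^{*}q,\ \gamma \in q(Bq)^{*},\ \|\alpha\| \le 4,\ \|\beta\| \le 2,\ \|\gamma\| \le 1\}$ is the image, under the jointly weak*-continuous addition map, of a product of three weak*-compact balls (here one uses that all three subspaces are weak* closed), hence is weak* compact. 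The bounded decomposition shows $S \cap \mathrm{Ball}(B^{*}) = K \cap \mathrm{Ball}(B^{*})$, which is therefore weak* closed; by Krein--Smulian $S$ is weak* closed, and the proof is complete.

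The heart of the matter, and the main obstacle, is precisely this weak* closedness. The natural attempt to mirror Lemma~\ref{tech}---choosing, along a convergent net $(\mu_t + \nu_t q + q\lambda_t)$ in $\mathrm{Ball}(S)$, bounded representatives of $\nu_t$ modulo $(qA)^{\perp}$ and of $\lambda_t$ modulo $(Aq)^{\perp}$ via Lemma~\ref{quotientnorm}---founders because the resulting estimate for the right part $\nu_t q$ involves $\|q\lambda_t\|$ and that for the left part $q\lambda_t$ involves $\|\nu_t q\|$, so the two bounds are coupled and neither can be controlled on its own. The commuting-idempotent decomposition is exactly the device that decouples them, producing the two bounded pieces simultaneously and intrinsically; its only real input is the annihilator description of the ranges of $E$ and $F$, which is where proximinality of one-sided ideals reappears.
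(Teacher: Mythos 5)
Your proof is correct, but it takes a genuinely different route from the paper's. The paper does not attack the weak* closedness of the three-term sum head on: it sets $p=q^\perp$, $J=pA^{**}\cap A$, $K=A^{**}p\cap A$, and first establishes the splitting $D^\perp=J^\perp+K^\perp$ by citing the fact from \cite{BHN} that $qA^*+A^*q$ is norm closed and equals $(pA^{**}p)_\perp$; it then finishes by applying Lemma \ref{tech} and its left-handed variant to each of $J^\perp$ and $K^\perp$. In effect the paper performs the ``decoupling'' you worry about at the level of $A^*$, by quoting an external result, and reuses the one-sided lemma twice. Your argument instead works entirely in $B^*$: the commuting contractive idempotents $E,F$ give an explicit bounded splitting $\varphi=(1-E)(1-F)\varphi+E(1-F)\varphi+F\varphi$ with constants $4,2,1$, the identification of $\mathrm{Range}(E)$ and $\mathrm{Range}(F)$ with annihilators of the one-sided ideals of $B$ determined by the closed projection $q$ supplies weak* closedness of the two module pieces, and Krein--Smulian closes the argument. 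What each buys: the paper's version is shorter on the page and leans on machinery already developed (Lemma \ref{tech} and \cite{BHN}), while yours is self-contained, yields quantitative norm control on the decomposition, and uses only the stated hypotheses ($q$ closed in $B^{**}$ and $q\in A^{\perp\perp}$ via Lemma \ref{weak}) --- in particular it does not route through the weak* density of $D$ in $pA^{**}p$, which is the point where the paper's ``weak* continuity argument'' quietly invokes the context of \cite{BHN}. Your closing diagnosis of why the naive imitation of Lemma \ref{tech} fails (the two quotient-norm estimates are coupled) is accurate and is precisely the obstruction both proofs are designed to circumvent.
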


\begin{proof}Set $p = q^\perp$, and let $J = pA^{**} \cap A$ and $K = A^{**}p \cap A$.  It is clear that $K^\perp + J^\perp \subset D^\perp$.  We show the other containment.  We argue as in the proof of Theorem 2.4 of \cite{BHN} to get that $(qA^* + A^*q)^\perp = pA^{**}p$.  Since $qA^* + A^*q$ is shown to be weak*-closed there, it follows that $qA^* + A^*q = (pA^{**}p)_\perp$.  If $\varphi \in D^\perp$, then by a weak* continuity argument, it follows that $\varphi \in (pA^{**}p)_\perp$.  Thus, there exist $\psi_1, \psi_2 \in A^*$ such that $\varphi = q\psi_1 + \psi_2q$.  Moreover, after extending $\psi_1$ and $\psi_2$ to $B$, it is easy to see that $q\psi_1$ and  $\psi_2q$ lie in $K^\perp$ and $J^\perp$, respectively.  So $D^\perp = K^\perp + J^\perp$.  By Lemma \ref{tech} and a left-handed variant of it, we have $J^{\perp} = A^{\perp} + (qB)^*q$ and $K^{\perp} = A^{\perp} + q(Bq)^*$.  Putting these two together gives the desired result.
\end{proof}

We close this section by stating a variant of a theorem due to D.C. Taylor which is needed in the next section.  It relates strictly continuous linear functionals on the multiplier algebra to bounded linear functionals.

\begin{theorem}\label{Tay}(Taylor \cite{Tay70})If $C$ is a $C^*$-algebra, then the ${\beta_C}$-continuous linear functionals on $M(C)$ are precisely those functionals on $M(C)$ of the form $c \mu b$ for $b,c \in C$ and $\mu \in M(C)^*$.
\end{theorem}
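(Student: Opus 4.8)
The plan is to prove the two inclusions separately, treating the routine inclusion first and reserving the bulk of the work for showing that every $\beta_C$-continuous functional has the asserted factored form.

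First I would verify that every functional of the form $c\mu b$ (with $b,c\in C$ and $\mu\in M(C)^*$), written so that $(c\mu b)(\eta)=\mu(b\eta c)$, is $\beta_C$-continuous. This is immediate from the estimate
\[
|(c\mu b)(\eta)| = |\mu(b\eta c)| \le \|\mu\|\,\|b\|\,\|\eta c\|,
\]
since the seminorm $\eta\mapsto\|\eta c\|$ is dominated by the generating strict seminorm $\eta\mapsto\|\eta c\|+\|c\eta\|$. (With the opposite convention $(c\mu b)(\eta)=\mu(c\eta b)$ the same argument uses the left seminorm $\eta\mapsto\|c\eta\|$ instead.) Thus every such functional lies in the strict dual.

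For the converse---the heart of the matter---let $f$ be a $\beta_C$-continuous functional. Because $\beta_C$ is coarser than the norm topology, $f$ is norm bounded, so its restriction $\nu:=f|_C$ lies in $C^*$. The two facts I would lean on are: (i) $C$ is $\beta_C$-dense in $M(C)$, which follows by checking that for any approximate identity $(e_t)$ of $C$ one has $e_t\eta\to\eta$ strictly for every $\eta\in M(C)$ (here $e_t\eta\in C$ since $C$ is an ideal of $M(C)$); and (ii) the two-sided Cohen--Hewitt factorization theorem applied to the Banach $C$-bimodule $C^*$, which lets me write $\nu=b\cdot\mu_0\cdot c$ for some $b,c\in C$ and $\mu_0\in C^*$, i.e. $\nu(x)=\mu_0(bxc)$ for all $x\in C$. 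Extending $\mu_0$ by Hahn--Banach to some $\tilde\mu_0\in M(C)^*$ and setting $g:=c\,\tilde\mu_0\,b$, the first part shows $g$ is $\beta_C$-continuous, while $g|_C=\nu=f|_C$. Since $f$ and $g$ are $\beta_C$-continuous and agree on the $\beta_C$-dense subalgebra $C$, they agree on all of $M(C)$, giving $f=c\,\tilde\mu_0\,b$.

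The main obstacle is step (ii): the applicability of Cohen--Hewitt factorization requires that $C^*$ be an essential (neo-unital) $C$-bimodule, i.e. that the approximate identity act as a norm approximate identity on $C^*$ from both sides, $\|e_t\cdot\rho-\rho\|\to0$ and $\|\rho\cdot e_t-\rho\|\to0$ for all $\rho\in C^*$. This is where the $C^*$-structure is essential: for positive $\rho$ one obtains it from the Cauchy--Schwarz estimate $|\rho((1-e_t)x)|\le \rho(1-e_t)^{1/2}\,\|\rho\|^{1/2}\|x\|$ together with $\rho(e_t)\to\|\rho\|$, and the general case follows by decomposing $\rho$ into positive functionals. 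An alternative to invoking factorization directly is to use the continuity estimate to produce, after the standard $C^*$-algebra trick of replacing finitely many elements $a_i$ by $(\sum_i a_ia_i^*)^{1/2}$, a single bound $|f(\eta)|\le K(\|\eta u\|+\|v\eta\|)$; factoring $f$ through the map $\eta\mapsto(\eta u,v\eta)$ into $C\oplus C$ and applying Hahn--Banach then yields $f(\eta)=\mu_1(\eta u)+\mu_2(v\eta)$ with $\mu_1,\mu_2\in C^*$, but consolidating this sum of two one-sided terms into a single two-sided sandwich $c\mu b$ again requires a factorization argument, so the Cohen--Hewitt route seems the most economical.
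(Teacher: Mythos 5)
The paper does not actually prove this statement: it is quoted as a known result of Taylor and supported only by the citation to \cite{Tay70}, so there is no internal proof to compare against. Judged on its own, your argument is correct and is essentially the standard (and essentially Taylor's) proof. The easy inclusion via $|\mu(b\eta c)|\le\|\mu\|\,\|b\|\,\|\eta c\|$ is fine; the strict density of $C$ in $M(C)$ via $e_t\eta\to\eta$ (using $\eta b\in C$ for the right seminorm and $\|ce_t-c\|\to 0$ for the left one) is fine; and the key step --- restricting $f$ to $\nu\in C^*$, factoring $\nu=b\cdot\mu_0\cdot c$ by Cohen--Hewitt applied to $C^*$ as an essential Banach $C$-bimodule, extending $\mu_0$ by Hahn--Banach, and concluding $f=c\tilde\mu_0 b$ from agreement of two strictly continuous functionals on the strictly dense set $C$ --- is exactly the right mechanism. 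Your identification of the essentiality of the bimodule $C^*$ (via Cauchy--Schwarz and $\rho(e_t)\to\|\rho\|$ for positive $\rho$, then decomposing) as the point where the $C^*$-structure enters is accurate; note also that the two-sided factorization you invoke can be obtained simply by applying the one-sided Cohen factorization twice, once for each commuting module action, so no separate two-sided theorem is really needed. The only nitpick is notational: with the paper's convention $(c\mu)(x)=\mu(xc)$ and $(\mu b)(x)=\mu(bx)$ one gets $(c\mu b)(\eta)=\mu(b\eta c)$, which is the convention you adopt, so your statement and the paper's coincide.
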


Here we are using the natural left and right module actions of $C$ on $M(C)^*$.  For example, given $\mu \in C^*$ and $c \in C$, then $(c \mu)(x) = \mu (xc)$ for all $x \in M(C)$.

\section{Main result}\label{main}
\begin{theorem}Let $A$ be a unital subalgebra of a $C^*$-algebra $B$ and let $p$ be an open projection in $B^{**}$.  If $C$ is the hereditary subalgebra of $B$ supported by $p$ and $D = A \cap C$, then the following are equivalent:
\begin{enumerate}
\item $p$ is open in $A^{**}$.
\item $M(C:D)$ equals the $\beta_C$-closure of $D$ in $M(C)$.
\item $p$ lies in the $\beta_C$-closure of $D$ inside $M(C)$.
\end{enumerate}
Moreover, if any of the above equivalent conditions hold, then $D$ is a hereditary subalgebra of $A$ and $M(C:D) = M(D)$.
\end{theorem}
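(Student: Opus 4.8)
The plan is to establish the cycle $(1)\Rightarrow(2)\Rightarrow(3)\Rightarrow(1)$ and then read off the final assertion. One inclusion toward $(2)$ is automatic and costs nothing: if $d_\lambda\to\eta$ in $\beta_C$ with $d_\lambda\in D$ and $\eta\in M(C)$, then for each $d\in D\subseteq C$ the defining seminorms force $d_\lambda d\to\eta d$ and $d\,d_\lambda\to d\eta$ in norm; since $D$ is norm closed this gives $\eta D\subseteq D$ and $D\eta\subseteq D$, so $\eta\in M(C:D)$. Thus $\overline{D}^{\beta_C}\subseteq M(C:D)$ with no hypothesis. For the reverse inclusion under $(1)$ I would invoke the contractive approximate identity $(e_t)$ of $D$ quoted from \cite{BHN}, which is simultaneously one for $C$. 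Given $\eta\in M(C:D)$ the net $\eta e_t$ lies in $D$, and for $c\in C$ one computes $\|(\eta e_t-\eta)c\|=\|\eta(e_tc-c)\|\to0$ and $\|c(\eta e_t-\eta)\|=\|(c\eta)e_t-c\eta\|\to0$, the latter because $c\eta\in C$. Hence $\eta e_t\to\eta$ in $\beta_C$, so $\eta\in\overline{D}^{\beta_C}$, proving $(2)$. The implication $(2)\Rightarrow(3)$ is then immediate, since $p$ is the identity of $M(C)$, whence $pD=Dp=D$ and $p\in M(C:D)=\overline{D}^{\beta_C}$.

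The substance lies in $(3)\Rightarrow(1)$, where I must pass from membership in a strict closure to membership in the weak$^*$ closure $D^{\perp\perp}$ of $D$ in $B^{**}$, which by \cite{BHN} is exactly condition $(1)$. The conceptual key I would isolate is that $\beta_C$ is finer than the weak$^*$ topology that $M(C)\subseteq C^{**}\cong pB^{**}p$ inherits from $B^{**}$. Indeed, every $\rho\in B^*$ restricts to a normal functional on the corner $pB^{**}p$, hence to an element of $C^*$, and by Taylor's Theorem \ref{Tay} every functional in $C^*$ extends to a $\beta_C$-continuous functional on $M(C)$; one checks that $\rho|_{M(C)}$ is this extension. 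Granting this, if $(d_\lambda)\subseteq D$ converges to $p$ in $\beta_C$ then $\rho(d_\lambda)\to\rho(p)$ for every $\rho\in B^*$, so $d_\lambda\to p$ weak$^*$ in $B^{**}$ and therefore $p\in\overline{D}^{\,w*}=D^{\perp\perp}$, which is $(1)$.

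The hard part will be making this strict-to-weak$^*$ passage rigorous; its content is that the restriction of $\rho\in B^*$ to $M(C)$ factors through the module action of $C$ (writing $\rho|_C=c\mu$ for suitable $c\in C$, $\mu\in C^*$), which is what renders it $\beta_C$-continuous. I expect the cleanest rigorous route to be the dual one, verifying $\rho(p)=0$ for every $\rho\in D^\perp$ directly. Setting $q=p^{\perp}$ (a closed projection, as $p$ is open in $B^{**}$), Lemma \ref{hsatech} gives $D^\perp=A^\perp+(qB)^*q+q(Bq)^*$, and since $qp=pq=0$ the last two summands annihilate $p$; the $A^\perp$ summand is killed precisely once one knows $p\in A^{\perp\perp}$. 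The genuine obstacle is therefore to extract $p\in A^{\perp\perp}$ from $(3)$ alone. Note this is also the hypothesis needed to apply Lemma \ref{hsatech}: by Lemma \ref{weak} and unitality of $A$, the condition $\mu q\in A^\perp$ for all $\mu\in A^\perp$ is equivalent to $q\in A^{\perp\perp}$, i.e.\ to $p\in A^{\perp\perp}$. I would obtain this from the strict-to-weak$^*$ step above (which in fact yields the stronger $p\in D^{\perp\perp}$ outright), so that either the direct net argument or the orthogonality decomposition closes the cycle.

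Finally, the ``moreover'' clause is essentially free once the equivalences are in hand: under $(1)$ the set $D=pA^{**}p\cap A$ is by definition the hereditary subalgebra of $A$ supported by $p$, and the opening lemma of the introduction identifies $M(C:D)$ with $M(D)$; since the three conditions are equivalent, this holds whenever any one of them does.
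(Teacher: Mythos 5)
Your proposal is correct, but for $(1)\Rightarrow(2)$ --- the implication the paper singles out as the difficult one --- you take a genuinely different and substantially shorter route. The paper proves $M(C:D)\subseteq\overline{D}^{\beta_C}$ by Hahn--Banach separation: it takes a $\beta_C$-continuous functional $\varphi$ annihilating $D$ but not $\eta$, writes $\varphi=c_0\lambda_0 b_0$ via Theorem \ref{Tay}, manufactures from it a functional $\rho\in D^\perp$ with $\rho(1)=1$ and $q\rho q=q\rho=\rho q=0$, and then invokes the decomposition $D^\perp=A^\perp+(qB)^*q+q(Bq)^*$ of Lemma \ref{hsatech} (which rests on the Glicksberg-type Lemmas \ref{quotientnorm} and \ref{tech}) to force $\rho\in A^\perp$, contradicting $\rho(1)=1$. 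You instead observe that for $\eta\in M(C:D)$ the net $(\eta e_t)$ lies in $D$ and converges $\beta_C$-strictly to $\eta$; the seminorm estimates you give are correct, and they are immediate \emph{once one knows} that the contractive approximate identity $(e_t)$ of $D$ is simultaneously a two-sided norm approximate identity for $C$. That transfer of the approximate identity from $D$ to $C$ is exactly where the weight of your argument sits: it is stated without proof in the paper's introduction (imported from \cite{BHN}, where it comes from identifying $C$ with the hereditary $C^*$-subalgebra generated by $D$), and the paper itself relies on it in the introductory lemma, so you are entitled to use it --- but you should flag it as the load-bearing input rather than the routine computations. What the paper's longer route buys is the structural identity $D^\perp=A^\perp+(qB)^*q+q(Bq)^*$, of independent interest as a noncommutative Glicksberg theorem; what yours buys is a proof of the main theorem that bypasses Section 2 entirely. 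Your $(3)\Rightarrow(1)$ is essentially the paper's argument (Cohen factorization $\varphi|_C=c_0\lambda_0$ renders the restriction to $M(C)$ of any $\varphi\in B^*$ $\beta_C$-continuous, so $\beta_C$-convergence to $p$ upgrades to weak$^*$ convergence and $p\in D^{\perp\perp}$), and you correctly diagnose the circularity that would arise from routing this step through Lemma \ref{hsatech}. The remaining pieces match the paper.
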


\begin{proof}
For $(1)$ implies (2), we first note that (1) implies that $p \in D^{\perp\perp}$
so that $D$ has a contractive approximate identity.  Thus, $M(D) = M(C:D)$.
Let $\eta$ be in the $\beta_C$-closure of $D \subset M(C)$ and suppose that $(a_t)$ is a net in
$D$ converging $\beta_C$-strictly to $\eta$.  Then for $b \in D \subset C$, we have
that $a_t b$ converges to $\eta b$ in norm.
However, $a_t b$ is an element of $D$, which is norm closed.
Hence, $\eta b \in D$, and a similar argument shows that $b \eta \in D$, so that $\eta \in M(C:D)$.
Thus, $\overline{D}^{\beta_C} \subset M(C:D)$.

Now suppose that $\eta$ is in $M(C:D)$ but not in $\overline{D}^{\beta_C}$.  Then there is a $\beta_C$-continuous linear functional $\varphi$ on $M(C)$ such that $\varphi (\eta) = 1$ and $\varphi (D) = 0$.  However, by the result of Taylor above, there exist $b_0, c_0 \in C$ and $\lambda_0 \in M(C)^*$ such that $\varphi = c_0 \lambda_0 b_0 $.
Define $\rho \in B^*$ by  $\rho (x) = \lambda_0(b_0 x \eta c_0)$ for all $x \in B$.
To see that this is well-defined, let $x \in B$.   Since $\eta$ is a multiplier of $C$, we have $\eta c_0 \in C$, and because $C$ is an inner ideal of $B$, it follows that $b_0 x \eta c_0 \in C$.
Moreover, we also have
$$\rho (1) = \lambda_0(b_0 1 \eta c_0) = \varphi (\eta) = 1,$$
and for $a \in D$, we have
$$\rho(a) =  \lambda_0(b_0 a \eta c_0) = \varphi (a \eta),$$ since $a \eta \in
D$.   Thus, $\rho \in D^{\perp}$.  Set $q = p^\perp$.  From the definition of $\rho$ and the fact that $b_0q = qc_0 = 0$, it is also clear that $q \rho q = q \rho = \rho q = 0$.

 By Lemma \ref{hsatech} there exists $\mu \in A^{\perp}$ and $\psi_1 \in (q B)^{*}$ and $\psi_2 \in (Bq)^*$ such that
$\rho = \mu + \psi_1 q + q \psi_2$.  Since $p + q = 1$, we may write
\begin{eqnarray*}\mu + \psi_1 q + q \psi_2 & = & \mu + p \psi_1 q +q \psi_1 q + q \psi_2 p + q \psi_2 q \\
& = &\mu + p \psi_1 q + q \psi_2 p + (q \psi_1 q +  q \psi_2 q ).
\end{eqnarray*}
Since we are assuming $(1)$ holds, we have $\mu q, q \mu, q \mu q \in A^{\perp}$.
Pre- and post-multiplying $\rho$ by $q$ leads to
$$0 = q \rho q = q \mu q + (q \psi_1 q +  q \psi_2 q ),$$ so that $q \psi_1 q +  q \psi_2 q \in A^\perp.$  Only pre-multiplying $\rho$ by $q$ leads to
$$0 = q \rho = q \mu + q \psi_2 p + (q \psi_1 q +  q \psi_2 q ),$$ so that $q \psi_2 p \in A^\perp.$  Finally, post-multiplying by $q$
gives
$$0 = \rho q = p \psi_1 q + (q \psi_1 q +  q \psi_2 q ),$$ so that $p \psi_1 q \in A^\perp.$
We conclude that $\rho \in A^{\perp}$, which contradicts $\rho(1) = 1$.

($(2) \Rightarrow (3)$) is obvious.

($(3) \Rightarrow (1)$) It suffices to show that $p \in D^{\perp \perp}$.  Suppose there exists a net $(a_t)$ in $D$ which converges $\beta_C$-strictly to $p$.  If $p$ is not in $D^{\perp\perp}$, there exists a $\varphi \in B^*$ such that $\varphi (p) =1$, yet $\varphi (D) = 0$.  Restricting $\varphi$ to $C$, there exists $c_0 \in C$ and $\lambda_0 \in C^*$ such that $\varphi |_C = c_0\lambda_0$.  Then
$$0 = \varphi (a_t) = \lambda_0(a_tc_0),$$ which converges to $\lambda_0(pc_0) = \lambda_0(c_0)$, so that $\lambda_0 (c_0) = 0.$  If $(e_t)$ is a contractive approximate identity for $C$, then $\varphi (p) = \lim_s \varphi (e_s) = \lim_s\lambda_0(e_sc_0) = \lim_s\lambda_0(c_0)= 0$, a contradiction.
\end{proof}

\end{document}